\renewcommand{\mathcal}{\mathscr}
\def\R {\mathbb{R}}
\renewcommand{\epsilon}{\varepsilon}
\newcommand{\eps}{\varepsilon}
\newcommand{\e}{\varepsilon}
\renewcommand{\leq}{\leqslant}
\renewcommand{\le}{\leqslant}
\renewcommand{\geq}{\geqslant}
\renewcommand{\ge}{\geqslant}
\newtheorem{proposition}{Proposition}[section]
\newtheorem{theorem}[proposition]{Theorem}
\newtheorem{corollary}[proposition]{Corollary}
\newtheorem{lemma}[proposition]{Lemma}
\theoremstyle{definition}
\newtheorem{remark}[proposition]{Remark}
\numberwithin{equation}{section}
\title[Fractional symmetric self-shrinkers]{Symmetric self-shrinkers for the fractional mean curvature flow}
\author[A. Cesaroni, M. Novaga]{}
\keywords{Fractional mean curvature flow, self--similar solutions, singularities.}
 \email{annalisa.cesaroni@unipd.it}
 \email{matteo.novaga@unipi.it}
\thanks{The authors were supported by the Italian INDAM--GNAMPA and by the University
of Pisa via grant PRA 2017 {\it Problemi di ottimizzazione e di evoluzione in ambito variazionale}.}
\begin{document}

\maketitle

\centerline{\scshape Annalisa Cesaroni}
\medskip
{\footnotesize
 \centerline{Department of Statistical Sciences}
   \centerline{University of Padova}
   \centerline{Via Cesare Battisti 141, 35121 Padova, Italy  }
} 

\medskip

\centerline{\scshape Matteo Novaga}
\medskip
{\footnotesize
 \centerline{ Department of Mathematics}
   \centerline{University of Pisa}
   \centerline{Largo Bruno Pontecorvo 5, 56127 Pisa, Italy  }
}

\medskip

\begin{abstract}
We show existence of  homothetically shrinking solutions of the fractional mean curvature flow, 
whose boundary consists in 
 a prescribed number of concentric spheres. 
  We prove that all these solutions, except from the ball,  are  dynamically unstable. 
 \end{abstract}


\section{Introduction}
Let us introduce the geometric evolution which we consider in this paper.
Given an initial set~$E\subset\R^n$, we define its evolution~$E_t$ according to
fractional mean curvature flow as follows:
the velocity at a point~$x\in \partial E_t$ is given by
\begin{equation}\label{kflow} 
\partial_t x\cdot \nu=-H_s(x,E_t):=-\lim_{\e\to 0}
\int_{\R^n\setminus B_\e(x)}\Big(
\chi_{\R^n\setminus E_t}(y)-\chi_{E_t}(y)\Big)\ \frac{1}{|x-y|^{n+s}}\,dy,\end{equation} 
where $s\in (0,1)$ is a fixed parameter and $\nu$ is the outer normal at $\partial E_t$ in $x$.  The fractional mean curvature of a set has been introduced in \cite{MR2675483} 
as the first variation of the fractional perimeter functional, and it has been proved in \cite{av}  that  for sufficiently smooth sets $E$ the rescaled fractional mean curvature 
 $(1-s)H_s(x, E)$ converges  as $s\to 1$ to the classical mean curvature of $E$ at $x$.
The evolution law \eqref{kflow} can be interpreted as the $L^2$-gradient flow of the fractional perimeter. 

Existence and uniqueness of viscosity solutions to a level set formulation of \eqref{kflow} has been provided in \cites{i, cmp}, and qualitative properties of  smooth solutions  have been studied in \cite{SAEZ}. However, we point out that the short-time existence of smooth solutions   has not yet been proved.
In \cite{cs}  the convergence to the  fractional mean curvature flow of a threshold dynamics scheme is proved;
this result was adapted to the anisotropic case, even in presence of a driving force in \cite{cnr},  where it is 
also shown that the flow  preserves convexity.  It has also been observed that the geometric law \eqref{kflow} presents some  different behavior with respect to the classical mean curvature flow: we refer for instance to the paper \cite{csv1} about the  formation of neck-pinch singularities,
and to the paper  \cite{cdnv} about  fattening and non-fattening phenomena.

In this paper we are interested in the homothetically shrinking  solutions for the flow \eqref{kflow}. 
A homothetic solution to \eqref{kflow} is a self-similar solution  to \eqref{kflow}: substituting  $E_t=\lambda(t) E$   in \eqref{kflow}, it is easy to see, using scale invariance of the fractional mean curvature,  that this is equivalent to  $\lambda' (t) x\cdot \nu=-\frac{1}{\lambda(t)^s}H_s(x,E)$ for all $x\in \partial E$. 
So homothetically shrinking solutions to \eqref{kflow} are given by the solutions to \eqref{kflow} with initial datum every set $E\subseteq \R^n$ of class $C^{1,1}$ which satisfies 
\begin{equation}\label{omo}
x\cdot \nu= c\, H_s(x, E)\qquad \text{for some constant $c>0$. }
\end{equation}  

 Homothetically shrinking solutions are particularly relevant in the analysis of the classical mean curvature flow, as they are {\it canonical} examples of singularities,
in the sense that any solution converges to a self-shrinker, if properly rescaled around a singular point. This result follows from an important {\it monotonicity formula} established by G. Huisken in \cite{MR1030675} for the mean curvature flow. The analog of such formula in the fractional setting is still an open problem. We recall moreover that, at the moment,
the existence theorem  for  local in time regular solutions of \eqref{kflow}, even if expected, has not been proved.  

It is well-known that the only embedded planar curve which is  homothetically shrinking under curvature flow is the circle \cite{MR845704}, whereas in higher dimensions there exist other  smooth embedded surfaces which are self-shrinkers for  the mean curvature flow, starting from  the  rotationally symmetric torus discovered by Angenent \cites{MR1167827}, and then going to more complex configurations as punctured  compact surfaces or  non-compact asymptotically conical surfaces, see \cites{MR1361726,kl}. However, it is easy to show that the ball is the only self-shrinker which is also radially symmetric.

In the fractional setting the classification of self-shrinkers is still at a very early stage. 
As far as we know, we provide here the first examples of fractional self-shrinkers which are different from  balls and cylinders.
More precisely, in Section \ref{sec:ex} we show the existence of 
homothetic solutions to the flow \eqref{kflow} which are radially symmetric, and have a prescribed number of boundary spheres (see Theorem \ref{omoproptris}). Moreover, in the case of a single annulus, we show uniqueness of the ratio ${R/r}$ for which the flow starting from the  annulus $B_R\setminus B_r$ self-similarly shrinks to a point.  
The existence of such  radially symmetric self-shrinkers, different from balls,  is a  new feature compared with the local case, and it is  due to the nonlocal nature
of the fractional mean curvature. 
 
\smallskip
A natural question arising about self-similar shrinkers is the issue of their dynamic stability. In the case of the classical mean curvature flow, the study of the dynamic stability of self-shrinkers was initiated in \cite{cm},
and later developed by other authors. From the convergence results in \cites{MR840401,MR772132}
it follows that the balls is dynamically stable under mean curvature flow (see also \cites{MR1637988,np,pp} for a discussion of the stability of the Wulff-Shape as homothetic solution of the anisotropic and crystalline curvature flow). Moreover,  in \cite{cm}
it is shown that balls and cylinders are the only stable self-shrinkers.

In the fractional case none of such results is currently available, in particular it is not known whether the ball is dynamically stable, and if convex sets shrink to a round point at the singular time. We discuss  in this paper the stability issue for the class of solutions that we construct in Theorem \ref{omoproptris}. 
In particular, in Section \ref{sec:stab} we show that the radial self-shrinkers different from the ball are all dynamically unstable (see Theorem \ref{teostab}). 

\smallskip

\noindent {\bf Acknowledgements.} The authors are members of INDAM-GNAMPA.
The second author was partially supported by the University of Pisa Project PRA 2017 "Problemi di ottimizzazione e di evoluzione in ambito variazionale".

\section{Existence of symmetric self-shrinkers} \label{sec:ex} 
We start with a technical result which will be useful in the sequel.
We denote by $B_r$ the ball of center $0$ and radius $r>0$, and we let $B_r(x)=x+B_r$.  
Moreover, we recall that, by  the scale invariance of fractional mean curvature, for all $x_r\in \partial B_r$
there holds  (see \cite[Lemma 2]{SAEZ})
\[
H_s(x_r, B_r)=\frac{k(n)}{r^s}\quad\text{where }  k(n):=H_s(x_1, B_1).
\]  

\begin{lemma}\label{lemtech}
Let $x\in \R^n\setminus\{0\}$ and $\delta\ne 0$. Then, as $\delta\to 0$, the following estimate holds:
\[
\int_{\partial B_{|x|+\delta}} \frac{1}{|y-x|^{n+s}}\,dy =  \frac{1}{|\delta|^{1+s}} (c+ o(1)) \,,
\]
for a constant $c>0$ depending only on $n$ and $s$. 
\end{lemma}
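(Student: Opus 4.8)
The plan is to reduce, by rotational symmetry, to a one–dimensional integral in the colatitude angle, and then to rescale that angle at scale $|\delta|$. Since the integrand is invariant under rotations fixing the origin, we may assume $x=re_1$ with $r:=|x|>0$; write $\rho:=|x|+\delta=r+\delta$ (so $\rho>0$ for $|\delta|$ small) and $h:=|\delta|$. The point of $\partial B_\rho$ closest to $x$ is $\rho e_1$, at distance $h$, and it is near this point that the whole integral concentrates. I would parametrize $y\in\partial B_\rho$ by the colatitude $\theta\in[0,\pi]$ measured from the $e_1$–axis and by $\omega\in\mathbb{S}^{n-2}$, writing $y=\rho(\cos\theta\,e_1+\sin\theta\,\omega)$, so that $dy=\rho^{n-1}\sin^{n-2}\theta\,d\theta\,d\omega$ and, by the law of cosines,
\[
|y-x|^2=\rho^2+r^2-2\rho r\cos\theta=\delta^2+2\rho r\,(1-\cos\theta).
\]
For $n=1$ the claim is immediate by direct computation, since $\partial B_\rho=\{\pm\rho\}$ and the nearer of these to $x$ is at distance $h$; so assume $n\ge2$.

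Next I would fix a small $\theta_0\in(0,\pi/2)$ and split $\partial B_\rho$ into the cap $\{\theta<\theta_0\}$ and its complement. On the complement $|y-x|^2\ge 2\rho r(1-\cos\theta_0)\ge\tfrac12 r^2(1-\cos\theta_0)>0$ for $|\delta|$ small, so that part of the integral is at most a constant times $|\partial B_\rho|$, hence $O(1)=o(h^{-(1+s)})$ and negligible. On the cap the integral equals
\[
|\mathbb{S}^{n-2}|\,\rho^{n-1}\int_0^{\theta_0}\frac{\sin^{n-2}\theta}{\bigl(\delta^2+2\rho r(1-\cos\theta)\bigr)^{(n+s)/2}}\,d\theta,
\]
and the change of variables $\theta=h\,t/\sqrt{\rho r}$ turns this into $h^{-(1+s)}\,a_h\int_0^{T_h}g_h(t)\,dt$, where $a_h=|\mathbb{S}^{n-2}|(\rho/r)^{(n-1)/2}\to|\mathbb{S}^{n-2}|$, $T_h=\theta_0\sqrt{\rho r}/h\to\infty$, and $g_h(t)\to t^{n-2}(1+t^2)^{-(n+s)/2}$ pointwise as $h\to0$ (using $1-\cos\theta=\tfrac12\theta^2+O(\theta^4)$, $\sin\theta=\theta+O(\theta^3)$ and $\rho\to r$). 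By the elementary inequalities $\sin\theta\le\theta$ and $1-\cos\theta\ge a\,\theta^2$ on $[0,\theta_0]$ (for some $a=a(\theta_0)>0$), the functions $g_h$ are dominated on $(0,\infty)$ by the fixed integrable function $t^{n-2}(1+2at^2)^{-(n+s)/2}$, so dominated convergence applies and yields the asserted estimate with
\[
c=|\mathbb{S}^{n-2}|\int_0^\infty\frac{t^{n-2}}{(1+t^2)^{(n+s)/2}}\,dt=\int_{\R^{n-1}}\frac{dw}{(1+|w|^2)^{(n+s)/2}}\in(0,\infty),
\]
the integral being finite because its integrand is $O(t^{n-2})$ near $t=0$ and $O(t^{-2-s})$ near $t=\infty$ (recall $n\ge2$ and $s>-1$).

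I expect the only genuinely delicate step to be the dominated–convergence argument on the cap, that is, producing the fixed integrable majorant for the rescaled integrands $g_h$; this rests on the lower bound $1-\cos\theta\ge a\theta^2$ and the upper bound $\sin\theta\le\theta$ on the fixed interval $[0,\theta_0]$, together with the boundedness (indeed, convergence to $1$) of $(\rho/r)^{(n-1)/2}$ as $\delta\to0$. This last fact also makes transparent that $c$ depends only on $n$ and $s$, and in particular neither on $|x|$ nor on the sign of $\delta$, as required.
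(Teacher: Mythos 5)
Your proof is correct and follows essentially the same route as the paper's: both isolate the contribution of the point of $\partial B_{|x|+\delta}$ nearest to $x$, rescale the tangential variable at scale $|\delta|$, and pass to the limit by dominated convergence, arriving at the same constant $c=(n-1)\omega_{n-1}\int_0^\infty t^{n-2}(1+t^2)^{-(n+s)/2}\,dt$. The only difference is in the parametrization: you use spherical coordinates on a small fixed cap, whereas the paper writes the near hemisphere as a graph over a disk in $\R^{n-1}$ and must then separately estimate the equatorial region where that chart degenerates (estimate \eqref{duelemma}); cutting off at a fixed angle $\theta_0$ makes that extra step unnecessary.
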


\begin{proof}
Up to a rotation of the reference system, we can assume that $x=-|x|e_1$. 
By the change of coordinates $y' = (y-x) $, we get,  
\[
\int_{\partial B_{|x|+\delta}} \frac{1}{|y-x|^{n+s}}\,dy = \int_{\partial B_{|x|+\delta}(|x|e_1)}  \frac{1}{|y'|^{n+s}}\,dy'\]
Note that
\begin{equation}\label{unolemma} 
\int_{\partial B_{|x|+\delta}(|x|e_1)\cap \{y'\cdot e_1>|x|\}}  \frac{1}{|y'|^{n+s}}\,dy'\leq \frac{n\omega_{n}\,(|x|+\delta)^{n-1}}{2\,|x|^{n+s}}\leq C.\end{equation} 
Moreover we write $\partial B_{|x|+\delta}(|x|e_1)\cap \{y'\cdot e_1<|x|\}=\{(f(z),z)\, |\, z\in \R^{n-1}, z\in B'_{|x|+\delta}\}$,  
where  $B'_r\subseteq \R^{n-1}$ denotes the ball of center $0$ and radius $r$ in $\R^{n-1}$  and $f(z)=|x|-\sqrt{(|x|+\delta)^2-|z|^2}$. 
Therefore, denoting $R_\delta:=\frac{|x|+\delta}{|\delta|}$, we get
\begin{eqnarray*} \nonumber 
&&  \int_{\partial B_{|x|+\delta}(-x)\cap \{y'\cdot e_1<|x|\}}  \frac{1}{|y'|^{n+s}}\,dy' =
 \int_{B'_{|x|+\delta}}\frac{|x|+\delta}{\sqrt{(|x|+\delta)^2-|z|^2}} \frac{1}{\left(f(z)^2+|z|^2 \right)^{\frac{n+s}{2}}}dz 
\\ 
&=&\frac{ (n-1)\omega_{n-1} }{|\delta|^{s+1}} \int_0^{R_\delta} \frac{R_\delta}{\sqrt{R_\delta^2-\rho^2}} 
\rho^{n-2}\left[\left( \frac{\delta}{|\delta|}-\frac{\rho^2}{R_\delta+\sqrt{R_\delta^2-\rho^2}}\right)^2+\rho^2 \right]^{-\frac{n+s}{2}}d\rho.\label{trelemma}
\end{eqnarray*}
Let
\[g_\delta(\rho):= \frac{R_\delta}{\sqrt{R_\delta^2-\rho^2}} 
\rho^{n-2}\left[\left( \frac{\delta}{|\delta|}-\frac{\rho^2}{R_\delta+\sqrt{R_\delta^2-\rho^2}}\right)^2+\rho^2 \right]^{-\frac{n+s}{2}}.\]  
Now we observe that there exists $C=C(n,s)>0$, such that 
\begin{equation}\label{duelemma} \int_{R_\delta/2}^{R_\delta} g_\delta(\rho)d\rho\leq \frac{C}{R_\delta^{s+1}}= C\frac{|\delta|^{s+1}}{(|x|+\delta)^{s+1}}.\end{equation} 
Moreover, taking $|\delta|$ sufficiently small such that $R_\delta>4$, we get that there exists a dimensional constant $C=C(n,s)>0$ such that 
\begin{equation}\label{quattrolemma} g_\delta(\rho)\chi_{(0,R_\delta/2)}(\rho)\leq C \chi_{(0,1)}(\rho)+C \frac{1}{\rho^{s+2}}\chi_{(1, +\infty)}(\rho)\in L^1(0, +\infty), \end{equation} 
where $\chi_{(a,b)}$ is the characteristic function of the interval $(a,b)$.  Using \eqref{quattrolemma}  and observing that \[g_\delta(\rho)  \to   \frac{\rho^{n-2}}{(1+\rho^2)^\frac{n+s}{2}}\qquad \text{as $\delta\to 0$}, \] 
we conclude by \eqref{duelemma} and by Lebesgue dominated convergence theorem that 
\[ \int_{\partial B_{|x|+\delta}(-x)\cap \{y_1'<|x|e_1\}}  \frac{1}{|y'|^{n+s}}\,dy'= \frac{ (n-1)\omega_{n-1} }{|\delta|^{s+1}}\left[ \int_0^{\frac{R_\delta}{2}}g_\delta(\rho)d\rho+  \int_{\frac{R_\delta}{2}}^{R_\delta}g_\delta(\rho)d\rho\right]=
\frac{c+o(1)}{|\delta|^{s+1}} \]  where \[c :=  (n-1)\,\omega_{n-1}
\int_0^{+\infty}\frac{\rho^{n-2}}{(1+\rho^2)^\frac{n+s}{2}}\,d\rho\,.
\] The conclusion follows by this estimate and \eqref{unolemma}. 
\end{proof}
First of all we look to the  simplest example of rotationally symmetric set different from a ball. We show that there exists a unique value of the ratio $\frac{R}{r}$ which depends on the dimension $n$ and on the fractional power $s\in (0,1)$ such that the annulus $B_R\setminus B_r$ is a self-shrinker. 
 
 \begin{proposition}\label{omoprop}  
Let $n\geq 1$. Then, for all $R>0$ fixed there exists a unique  $r=r(n,s)\in (0,R) $ depending only on $R$,  $s\in (0,1)$ and $n$, such that the flow \eqref{kflow} with initial datum the  annulus 
\[
A:= B_R\setminus B_r
\] 
is a homothetically shrinking  solution of the flow.
\end{proposition}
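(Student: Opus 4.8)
The plan is to reduce the self-shrinking condition \eqref{omo} for $A=B_R\setminus B_r$ to a single scalar equation in the inner radius $r\in(0,R)$, and then to solve that equation by the intermediate value theorem together with a strict monotonicity property. Since $A$ is rotationally symmetric, both $x\cdot\nu$ and $H_s(\cdot,A)$ are constant on each boundary sphere: on $\partial B_R$ one has $x\cdot\nu=R$, whereas on $\partial B_r$ the outer normal of $A$ points towards the origin, so $x\cdot\nu=-r$. Writing
\[
\chi_{\R^n\setminus A}-\chi_A=(\chi_{\R^n\setminus B_R}-\chi_{B_R})+2\chi_{B_r}=-(\chi_{\R^n\setminus B_r}-\chi_{B_r})+2\chi_{\R^n\setminus B_R}
\]
and using the identity $H_s(\cdot,B_\rho)=k(n)/\rho^{s}$ recalled before the statement (and $k(n)>0$), one obtains, for $x_R\in\partial B_R$ and $x_r\in\partial B_r$,
\[
H_s(x_R,A)=\frac{k(n)}{R^s}+2\int_{B_r}\frac{dy}{|x_R-y|^{n+s}}=:H(r)>0,
\qquad
H_s(x_r,A)=-\frac{k(n)}{r^s}+2\int_{\R^n\setminus B_R}\frac{dy}{|x_r-y|^{n+s}}=:h(r),
\]
the extra integrals being absolutely convergent since the relevant domains have distance $R-r>0$ from the base point. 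Hence \eqref{omo} holds for $A$ exactly when $R=c\,H(r)$ and $-r=c\,h(r)$ for one and the same $c>0$; as $H(r)>0$, this forces $c=R/H(r)>0$, and the task reduces to finding $r\in(0,R)$ with
\[
\Phi(r):=R\,h(r)+r\,H(r)=0,
\]
any such $r$ automatically giving $h(r)<0$ and $c=-r/h(r)=R/H(r)>0$, i.e. a genuine shrinking solution.

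Next I would examine the continuous function $\Phi$ near the endpoints of $(0,R)$. As $r\to0^+$ the term $-R\,k(n)/r^{s}\to-\infty$ dominates, while $2R\int_{\R^n\setminus B_R}|x_r-y|^{-(n+s)}\,dy\to 2R\int_{\R^n\setminus B_R}|y|^{-(n+s)}\,dy<\infty$ and $r\,H(r)\to0$, so $\Phi(r)\to-\infty$. As $r\to R^-$ both $\int_{B_r}|x_R-y|^{-(n+s)}\,dy$ and $\int_{\R^n\setminus B_R}|x_r-y|^{-(n+s)}\,dy$ blow up, since their formal limits, with the base point on $\partial B_R$, are divergent integrals; the precise rate is that of Lemma \ref{lemtech}, but here monotone convergence already gives $\Phi(r)\to+\infty$. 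By the intermediate value theorem there is $r_*\in(0,R)$ with $\Phi(r_*)=0$.

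For uniqueness the plan is to show that $\Phi$ is strictly increasing on $(0,R)$. The map $r\mapsto\int_{B_r}|x_R-y|^{-(n+s)}\,dy$ is strictly increasing (its domain grows), so $H$ is strictly increasing and positive. For $h$, the map $r\mapsto-k(n)/r^{s}$ is strictly increasing, and $r\mapsto\int_{\R^n\setminus B_R}|x_r-y|^{-(n+s)}\,dy$ is strictly increasing in $r=|x_r|$: indeed $u(x):=\int_{\R^n\setminus B_R}|x-y|^{-(n+s)}\,dy$ is radial and $C^\infty$ on $B_R$ and there strictly subharmonic, because $\Delta_x|x-y|^{-(n+s)}=(n+s)(s+2)\,|x-y|^{-(n+s+2)}>0$; by the strong maximum principle the maximum of $u$ over a closed ball $\overline{B_\rho}\subset B_R$ is attained only on $\partial B_\rho$, which by rotational symmetry means precisely that the radial profile of $u$ is strictly increasing. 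Hence $h$ is strictly increasing, so
\[
\Phi'(r)=R\,h'(r)+H(r)+r\,H'(r)>0\qquad\text{for all }r\in(0,R).
\]
Together with the limits above, this yields a unique $r_*\in(0,R)$ solving $\Phi(r)=0$, hence a unique self-shrinking annulus; by the scaling invariance of \eqref{omo}, for general $R$ the solution is $r=R\,\rho(n,s)$ with a ratio $\rho(n,s)\in(0,1)$ depending only on $n$ and $s$.

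The step I expect to be the main obstacle is the monotonicity of the exterior potential $r\mapsto\int_{\R^n\setminus B_R}|x_r-y|^{-(n+s)}\,dy$: pointwise in $y$ the distance $|x_r-y|$ is not monotone as $r$ grows, so the monotonicity cannot be read off termwise and one genuinely needs the subharmonicity/maximum-principle argument (a rearrangement inequality would also work). The endpoint asymptotics are routine once Lemma \ref{lemtech} is available, and the remaining monotonicities are elementary.
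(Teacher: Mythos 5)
Your proposal is correct and follows essentially the same route as the paper: reduce to the scalar equation $f_s(r)=H_s(x_r,A)+r\,H_s(x_R,A)=0$, obtain existence from the intermediate value theorem via the endpoint limits $\mp\infty$, and uniqueness from strict monotonicity in $r$ of both curvatures. The only notable local differences are that you prove the monotonicity of the exterior potential $r\mapsto\int_{\R^n\setminus B_R}|x_r-y|^{-(n+s)}\,dy$ by strict subharmonicity plus the maximum principle (the paper asserts a pointwise comparison ``by symmetry of the kernel'', whereas your computation is the one the paper itself uses later in the proof of Theorem \ref{teostab}), and that you get the divergence of $f_s$ as $r\to R^-$ from monotone convergence alone, without invoking the precise rate of Lemma \ref{lemtech}.
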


\begin{proof} Up to rescaling the set we fix $R=1$. 
We observe that $A$ is a solution to \eqref{omo} if and only if for some $c>0$, 
\[
 1= c H_s(x_1,  A)\text{ for all $x_1$ with $|x_1|=1$}\quad \text{ and } \quad r=-cH_s(x_r, A) \text{ for all $x_r$ with $|x_r|=r$}\]
and so if and only if 
\begin{equation}\label{omo1} H_s(x_r, A)=-r H_s(x_1, A).
\end{equation}
By rotational invariance, we get that $H_s(x_r, A), H_s(x_1, A)$ do not depend on the points $x_r, x_1$, but only on $0<r<1$. Moreover they are both continuous functions with respect to $r$,  due to  the continuity  of the fractional mean curvature with respect to $C^2$-convergence of sets (see \cite[Section 5.2]{cmp}).
We consider the following function  defined for $r\in (0, 1)$ 
\begin{equation}\label{funzionef} f_s(r)= H_s(x_r, A)+r  H_s(x_1, A).\end{equation} Note that the function $f_s$ is continuous on $(0,1)$. To prove the statement it is sufficient to show that there exists a unique $r=r(n,s)$ such that $f_s(r(n,s))=0$.  

Let $r, r'$ such that $ 0<r<r'<1$.  By the inclusions $A_{1,r'}:= B_1\setminus B_{r'}\subseteq A\subseteq B_1$, we get, by the monotonicity of the fractional mean curvature (see \cite[Section 5.2]{cmp}),  
that  \begin{equation}\label{monofuori} H_s(x_1, A_{1,r'})>H_s(x_1, A)\geq H_s(x_1, B_1)= k(n) >0.\end{equation}  This implies that  
\begin{equation}\label{uno} 
r\in (0, 1)\mapsto H_s(x_1, A)\text{ is monotone increasing  and positive}.
\end{equation}

Moreover,  we observe, recalling the definitions, that 
\[H_s(x_r, A)=-H_s(x_r, B_r)+2\int_{\R^n\setminus B_1} \frac{1}{|x_r-y|^{n+s}}dy.\] 
Note that if $r'>r$ then $H_s(x_{r'}, B_{r'})= \frac{k(n)}{(r')^{s}}< \frac{k(n)}{r^s}= H_s(x_r, B_r)$, whereas for $1>r'>r$,  $|x_r-y|\geq |\frac{r'}{r} x_{r}-y|$ for all $y\in \R^n\setminus B_1$, and $x_ r$ with $|x_r|=r$. Therefore  by symmetry  of the kernel we have that \[\int_{\R^n\setminus B_1} \frac{1}{|x_r-y|^{n+s}}dy< \int_{\R^n\setminus B_1} \frac{1}{|x_{r'}-y|^{n+s}}dy\] for all $x_r, x_{r'}$  with $|x_r|=r, |x_{r'}|=r'$. 
Using these facts we conclude that 
  \begin{equation}\label{due} r\in (0, 1)\mapsto H_s(x_r, A)\text{ is monotone increasing}.
\end{equation}

Due to \eqref{uno}, \eqref{due}, we notice that the function $f_s(r)$ defined in \eqref{funzionef} is monotone increasing. 
Now we claim that  $\lim_{r\to 0} f_s(r)=-\infty$ and that $\lim_{r\to 1}f_s(r)=+\infty$.  If the claim is true, then the proof is concluded. 
 
 First of all we observe  that 
 \[ H_s(x_r, A)= \int_{\R^n\setminus B_{1}} \frac{1}{|x_r-y|^{n+s}}dy+\lim_{\eps\to 0}\left(\int_{B_{r-\eps}} \frac{1}{|x_r-y|^{n+s}}dy-\int_{B_1\setminus B_{r+\eps} }\frac{1}{|x_r-y|^{n+s}}dy\right).  \]This  implies that $\lim_{r\to 0} H_s(x_r, A)=-\infty$, and so also $\lim_{r\to 0} f_s(r)=-\infty$.

Moreover, recalling Lemma \ref{lemtech} we get that 
\begin{multline*}  H_s(x_r, A) =H_s(x_r, B_r)+\lim_{\eps\to 0}\left(2\int_{B_{r-\eps}} \frac{1}{|x_r-y|^{n+s}}dy-2\int_{B_1\setminus B_{r+\eps} }\frac{1}{|x_r-y|^{n+s}}dy\right) \\
 =\frac{k(n)}{r^s}+\lim_{\eps\to 0}\left(2\int_\eps^{r }\int_{\partial B_{r-\delta}} \frac{1}{|x_r-y|^{n+s}}dyd\delta-2\int_{\eps}^{1-r}\int_{\partial B_{r+\delta}}
 \frac{1}{|x_r-y|^{n+s}}dyd\delta \right)\\
 =\frac{k(n)}{r^s}+  2\int_{1-r}^{r}\left[\int_{\partial B_{r-\delta}} \frac{1}{|x_r-y|^{n+s}}dy-  \int_{\partial B_{r+\delta}}
 \frac{1}{|x_r-y|^{n+s}}dy\right]d\delta  \\
=\frac{k(n)}{r^s}+  2\big(c + o(1)\big) 
\left( \frac{1}{(1-r)^s}-\frac{1}{r^s}\right). \end{multline*} 
 So, $\lim_{r\to 1} H_s(x_r, A)=+\infty$, which permits to conclude that $\lim_{r\to 1} f_s(r)=+\infty$. 
\end{proof}

We now look for more general symmetric self-shrinkers, given by the union of a finite number of annuli.

 \begin{theorem}\label{omoproptris}  
For all $N\ge 1$ and all $R>0$ there exists  an increasing sequence 
$0< r_1< \ldots<r_{2N-1}<r_{2N}=R$, depending only on $n$, $s$ and $N$, such that 
such that the flow \eqref{kflow} with initial datum
\[
E:= \bigcup_{k=1}^N \left(B_{r_{2k}}\setminus B_{r_{2k-1}}\right)
\] 
is a homothetically shrinking  solution of \eqref{kflow}. 

Similarly, for all $N\ge 1$ and $R>0$ there exists an increasing sequence 
$0< \tilde r_0<\tilde r_1<\ldots<\tilde r_{2N-1}< r_{2N}=R$, depending only on $n$, $s$ and $N$, such that 
such that the flow \eqref{kflow} with initial datum
\[
\widetilde E:= B_{\tilde r_0}\cup\, \bigcup_{k=1}^N \left(B_{\tilde r_{2k}}\setminus B_{\tilde r_{2k-1}}\right)
\] 
is a homothetically shrinking  solution of \eqref{kflow}.
\end{theorem}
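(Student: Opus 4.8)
The plan is to reduce the construction of an $N$-annulus self-shrinker to finding a zero of a suitable vector field by a topological/degree argument. Writing the boundary spheres of $E$ as $\partial B_{r_1},\dots,\partial B_{r_{2N}}$, the self-shrinker condition \eqref{omo} is equivalent to the system
\[
H_s(x_{r_j}, E) = \frac{(-1)^{j+1}}{c}\, r_j \qquad j=1,\dots,2N,
\]
for some $c>0$, where the sign alternates because the outer normal to $E$ points outward on the spheres $\partial B_{r_{2k}}$ and inward on $\partial B_{r_{2k-1}}$. Eliminating $c$, this becomes a system of $2N-1$ scaling-invariant equations in the $2N-1$ ratios $\rho_j := r_j/R$, which I would phrase as looking for a zero of the map $F_s:\Sigma \to \R^{2N-1}$, $F_s(\rho)_j := r_{j+1} H_s(x_{r_j},E) + (-1)^j r_j H_s(x_{r_{j+1}},E)$ (a direct generalization of $f_s$ in \eqref{funzionef}), defined on the open simplex $\Sigma = \{0<\rho_1<\dots<\rho_{2N-1}<1\}$.

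The key analytic input is the boundary behavior of $F_s$ as $\rho$ approaches $\partial\Sigma$, and here Lemma \ref{lemtech} and the computations already carried out in the proof of Proposition \ref{omoprop} do the work. First, I would establish monotonicity and sign properties: by the monotonicity of $H_s$ under inclusion (\cite[Section 5.2]{cmp}), shrinking any inner sphere or enlarging any outer sphere decreases the relevant mean curvature, exactly as in \eqref{uno}--\eqref{due}. Then I would analyze the degenerations. When two consecutive radii collapse, $r_j \to r_{j+1}$, the singular integral $\int_{\partial B_{r_j}} |x_{r_{j+1}}-y|^{-n-s}\,dy$ blows up like $|r_{j+1}-r_j|^{-1-s}$ by Lemma \ref{lemtech}, forcing $H_s(x_{r_j},E) \to -\infty$ and $H_s(x_{r_{j+1}},E)\to+\infty$ (signs according to which side each sphere bounds), so the corresponding component of $F_s$ has a definite sign. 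When $r_1\to 0$ the innermost ``hole'' closes and $H_s(x_{r_1},E)\to-\infty$ as in the proof of Proposition \ref{omoprop}; when $r_{2N-1}\to R$ the outermost annulus degenerates and $H_s \to +\infty$ there.

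With these boundary estimates, I would run a topological-degree or Poincar\'e--Miranda argument: $F_s$ is continuous on $\overline\Sigma$ (using continuity of $H_s$ under $C^2$-convergence, \cite[Section 5.2]{cmp}), and on each face of the simplex the sign of the appropriate component is controlled by the blow-up analysis above, so $F_s$ cannot vanish on $\partial\Sigma$ and has nonzero degree (or satisfies the Miranda sign condition on the $2N-1$ pairs of opposite faces). Hence $F_s$ has a zero in $\Sigma$, giving the desired sequence $r_1<\dots<r_{2N-1}<r_{2N}=R$. For the second statement, with $\widetilde E = B_{\tilde r_0}\cup\bigcup(B_{\tilde r_{2k}}\setminus B_{\tilde r_{2k-1}})$ one has $2N$ free ratios $\tilde r_0/R<\dots<\tilde r_{2N-1}/R<1$ and $2N$ equations after eliminating $c$; the same scheme applies, with the extra degeneration $\tilde r_0\to 0$ (the innermost ball shrinks to a point, where $H_s(x_{\tilde r_0},\widetilde E)\to+\infty$ since it is an outer sphere of radius tending to $0$) and $\tilde r_0\to\tilde r_1$ (the ball merges with the first annulus, handled again by Lemma \ref{lemtech}).

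The main obstacle I expect is not the existence argument itself but the bookkeeping of signs: verifying that on every face of $\partial\Sigma$ exactly the right component of $F_s$ has a controlled, nonzero sign, uniformly up to the boundary, so that the degree-theoretic conclusion is legitimate. In particular one must make sure the blow-up from Lemma \ref{lemtech} genuinely dominates the bounded contributions from all the other (non-collapsing) spheres and from the regular part of $H_s$, which requires the error terms to be uniform in the remaining radii staying in a compact subset of their allowed range; this uniformity is implicit in Lemma \ref{lemtech} but should be stated carefully. Monotonicity in each variable, which held cleanly in the single-annulus case, is less transparent here because moving one radius changes the geometry seen by all spheres, so I would rely on the degree argument rather than on a global monotonicity of $F_s$.
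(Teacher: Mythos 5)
Your reduction to a system of $2N-1$ scale-invariant equations, and your identification of the relevant degenerations (collapse of consecutive radii, $r_1\to 0$, $r_{2N-1}\to R$) with Lemma \ref{lemtech} as the quantitative tool, all match the paper. But the existence mechanism you propose --- a Poincar\'e--Miranda or degree argument on the ordered simplex $\Sigma$ --- is asserted rather than carried out, and as described it does not go through. The simplex $\{0<\rho_1<\dots<\rho_{2N-1}<1\}$ has $2N$ facets while your map has only $2N-1$ components, so there are no ``$2N-1$ pairs of opposite faces'' on which to impose the Miranda condition; one must first pass to box coordinates, and the natural choices (e.g.\ $u_j=\rho_j/\rho_{j+1}$) send entire facets of the cube onto degenerate strata of $\Sigma$ where several radii collapse simultaneously and several components of $F_s$ blow up at competing rates that Lemma \ref{lemtech} alone does not compare. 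Moreover your sign analysis on a facet $\{\rho_j=\rho_{j+1}\}$ is off: when an annulus of $E$ collapses \emph{both} $H_s(x_{r_j},E)$ and $H_s(x_{r_{j+1}},E)$ tend to $+\infty$ (and both to $-\infty$ when a gap collapses), not to opposite infinities --- it is the signed quantities $(-1)^{i-1}H_s(x_{r_i},E)$ that diverge with opposite signs --- so two components of $F_s$ blow up on each such facet, with opposite signs, and it is unclear which one a degree argument should ``see''. The claims that $F_s\neq 0$ on $\partial\Sigma$ and that the degree is nonzero are precisely the content that is missing.

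The paper sidesteps all of this multidimensional topology with a sequential shooting argument: the radii are determined by induction, solving at each stage a one-variable equation $f_i(\bar r_1,\dots,\bar r_{i-1},r_i,\dots,r_{2N-1})=0$ by the intermediate value theorem for fixed outer radii, taking the \emph{smallest} root so that $\bar r_i$ depends continuously on $(r_{i+1},\dots,r_{2N-1})$, and propagating the asymptotic relation $|r_{i+1}-\bar r_i|/|r_{i+2}-\bar r_i|\to 1$ (see \eqref{lolli} and \eqref{bardi}), which is exactly what guarantees that the blow-up coming from the newly collapsing pair dominates the contributions of the previously determined inner radii at the next stage. To salvage your global approach you would need to (i) transform $\Sigma$ into a box with clean face/component combinatorics, (ii) prove uniformity of the $o(1)$ in Lemma \ref{lemtech} over compact families of the non-collapsing radii, and (iii) actually compute a nonzero degree, e.g.\ by homotopy to a model map. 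None of these steps is in your sketch, so the proposal has a genuine gap at its central step.
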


 \begin{proof} The argument is similar to that in the proof of Proposition \ref{omoprop}. 
As before, up to rescaling the sets $E$, $\widetilde E$, we can assume $r_{2N}=1$. Then, 
we want to find  radii $r_i$ 
in such a way that, letting $x_{r_i}\in  \partial B_{r_i}$, there hold
\begin{equation}\label{eqri}
f_i(r_1,\ldots,r_{2N-1}):= r_i H_s(x_1,E) + (-1)^{i-1} H_s(x_{r_i},E)=0
\qquad \forall\ i\in \{1,\ldots,2N-1\}  
\end{equation} 
and \begin{equation}\label{eqri2}
f_i(r_0,\ldots,r_{2N-1}):= r_i H_s(x_1,\widetilde E) + (-1)^{i-1} H_s(x_{r_i},\widetilde E)=0
\qquad \forall\ i\in \{0,\ldots,2N-1\}.
\end{equation} 
Notice that the functions $f_i$ are all continuous in their domain of definition.

We divide the proof into 4 steps. In the first step we deal with the case $N=1$ , and in  step 2, 3 and 4 we consider the case $N>1$.
For $N>1$ we provide the proof just of \eqref{eqri} for the existence of the set $E$, since the analogous assertion \eqref{eqri2} for $\widetilde E$ follows similarly. 

\vspace{0,3cm}

\noindent{\it Step 1.}  
The case $N=1$   for $E$ has been proved in  Propositions \ref{omoprop}.  So, we consider the set $\tilde E$. 

First of all we fix $r_1\in (0,1)$ and we prove that there exists $r_0=r_0(r_1)\in (0, r_1)$ such that $f_0(r_0(r_1), r_1)=0$ for all $r_1\in (0,1)$. 
Due to the monotonicity properties of the  fractional mean curvature, fixed  $r_1\in (0,1)$ we get
\[\lim_{r_0\to 0}  H_s(x_{1},\tilde E)=H_s(x_1, A_{1,r_1})>0\qquad  \lim_{r_0\to r_1} H_s(x_{1},\tilde E)=H_s(x_{1}, B_1)=k(n)>0.\]
Moreover, by definition we get that, 
\[ H_s(x_{r_0},\tilde E)=-2\int_{B_{1}\setminus B_{r_1}} \frac{1}{|x_{r_0}-y|^{n+s}}\,dy +
H_s(x_{r_0}, B_{r_0})= -2\int_{B_{1}\setminus B_{r_1}} \frac{1}{|x_{r_0}-y|^{n+s}}\,dy +\frac{k(n)}{r_0^s}, \]
from which we conclude that 
\[ \lim_{r_0\to 0}H_s(x_{r_0},\tilde E)=+\infty \qquad   \lim_{r_0\to r_1}H_s(x_{r_0},\tilde E)=-\infty.\]
Therefore, we obtain that
\[\lim_{r\to 0} f_0(r, r_1)=-\infty\qquad \lim_{r\to r_1} f_0(r, r_1)=+\infty.\]

By continuity of the function $f_0$, we deduce that for all $r_1 \in (0,1)$ there exists at least one $r=r(r_1)\in (0, r_1)$ such that 
\begin{equation}\label{f0r}
f_0(r(r_1), r_1)=0\,.
\end{equation}  
We choose as $r_0(r_1)$ to be the smallest among all possible $r(r_1)\in (0, r_1)$ which solve \eqref{f0r}.
Observe that due to this choice the function $r\to f_1(r_0(r), r)$ is continuous.    To conclude it is sufficient to prove that  that there exists $r_1\in (0,1)$ such that $ f_1(r_0(r_1), r_1)=0$. 
Indeed, this would imply that $(B_1\setminus B_{r_1})\cup B_{r_0(r_1)}$ is a solution to \eqref{omo}. 

Observe that $\lim_{r\to 0}r_0(r)=0$, and therefore we get
\begin{equation}\label{eqg1}
\lim_{r\to 0}  f_1(r_0(r), r)= -\infty.
\end{equation}

We now claim that 
\begin{equation}\label{eqg2}
\lim_{r\to 1}  f_1(r_0(r), r)= +\infty.
\end{equation}
Recalling Lemma \ref{lemtech}, we observe that as $r\to 1$,  
\begin{eqnarray}\nonumber
H_s(x_{1}, \tilde E) &=& 
2\int_{B_{r}\setminus B_{r_0(r)}} \frac{1}{|x_1-y|^{n+s}}\,dy +
H_s(x_1, B_1) 
= 2\int_{r_0(r)}^{r}\int_{\partial B_t} \frac{1}{|x_1-y|^{n+s}}\,dy\,dt+k(n)
\\ \label{cir1}
&=& 2\big(c + o(1)\big) \left( \frac{1}{(1-r)^s}- \frac{1}{(1-r_0(r))^s}\right) +k(n)
\end{eqnarray}
where the constant $c=c(n,s)>0$ is given by Lemma \ref{lemtech}.
Similarly, we have that 
\begin{eqnarray}\nonumber
H_s(x_{r},\tilde E) & =&\lim_{\eps\to 0}\left( 2\int_{B_{r-\eps}\setminus B_{r_0(r)}} \frac{1}{|x_{r}-y|^{n+s}}\,dy 
 -2\int_{B_{1}\setminus B_{r+\eps}} \frac{1}{|x_{r}-y|^{n+s}}\,dy\right) +H_s(x_{r}, B_{r})
\\ \nonumber &=& \lim_{\eps\to 0}\left(2 \int_{r_0(r)}^{r-\eps}\int_{\partial B_t}\frac{1}{|x_{r}-y|^{n+s}}\,dy-2\int_{r+\eps}^1 \int_{\partial B_t}\frac{1}{|x_{r}-y|^{n+s}}\,dy\right)+\frac{k(n)}{r^s}
\\\label{cir2} 
 &=& 2 \big(c + o(1)\big) \left(- \frac{1}{(r-r_0(r))^s}
+\frac{1}{(1-r)^s}\right)+\frac{k(n)}{r^s}
\end{eqnarray}
and 
\begin{eqnarray}\nonumber
H_s(x_{r_0(r)},\tilde E) &= &  - 2\int_{r_n}^ {1}\int_{\partial B_t}\frac{1}{|x_{r_0(r)}-y|^{n+s}}\,dy+ \frac{k(n)}{(r_0(r))^s}\\ \label{cir3} &=& - 2\big(c + o(1)\big) \left(  \frac{1}{(r-r_0(r))^s}- \frac{1}{(1-r_0(r))^s} \right)+ \frac{k(n)}{(r_0(r))^s}.\end{eqnarray} 
Therefore as $r\to 1$   by \eqref{cir1} and \eqref{cir2} 
\begin{equation}\label{f1nuova}  f_1(r_0(r), r)=
 2\big(c + o(1)\big) \left(\frac{1+r}{(1-r)^s} -\frac{1}{(r-r_0(r))^s}\ - \frac{r }{(1-r_0(r))^s}\right) + O(1).\end{equation}
We claim that
\begin{equation}\label{claim1} 
\lim_{r\to 1}\frac{r-r_0(r) }{1-r_0(r)}=1.\end{equation}
Note that the claim is equivalent to 
\[\lim_{r\to 1} \frac{1-r}{1-r_0(r)}=0 =\lim_{r\to 1} \frac{1-r}{r-r_0(r)}\]
and this implies immediately, recalling \eqref{f1nuova}, that $\lim_{r\to1} f_1(r_0(r),r)=+\infty$.

To prove \eqref{claim1} we recall  that $f_0(r_0(r),r)=0$  and using \eqref{cir1} and \eqref{cir3} we get 
\[2\big(c + o(1)\big) \left( \frac{r_0(r)}{(1-r)^s}- \frac{r_0(r)+1}{(1-r_0(r))^s}+ \frac{1}{(r-r_0(r))^s} \right)+r_0(r)k(n)+ \frac{k(n)}{(r_0(r))^s}=0\]
from which we deduce that
\begin{equation}\label{ester}
\frac{r_0(r)}{(1-r)^s}+\frac{1}{(r-r_0(r))^s} =  \frac{1+r_0(r)}{(1-r_0(r))^s} + O(1).
\end{equation}
Recalling that 
\[
\frac{1}{(1-r)^s}\ge  \frac{1}{(1-r_0(r))^s}\,\quad\text{and}\quad  \frac{1}{(r-r_0(r))^s}\ge  \frac{1}{(1-r_0(r))^s}\,
\]
from \eqref{ester} we get that
\[
\frac{1}{(1-r_0(r))^s}\le  \frac{1}{(r-r_0(r))^s} \le \frac{1}{(1-r_0(r))^s} + O(1)\,,
\]
which gives the claim \eqref{claim1}. 

\smallskip
 
By continuity of $f_1$, from \eqref{eqg1} and \eqref{eqg2} it follows that there exists $r_1\in (0,1)$ such that
 $ f_1(r_0(r_1), r_1)=0$, which gives the thesis.
 
\vspace{0,3cm}

\noindent{\it Step 2.}  We pass now to consider the case $N>1$. 
We provide a proof of the existence of a sequence of radii $r_i$ which solves \eqref{eqri}. 
We shall determine $r_i$ by induction on $i$.

 For $i=1$ we observe that, given a choice of $0<r_2<\ldots< r_{2N-1}<1$, we have
\[
\lim_{r_1\to 0}H_s(x_{r_1},E) = -\infty\quad\text{and}\quad \lim_{r_1\to r_2}H_s(x_{r_1},E) = +\infty.
\]
By continuity of the function $f_1$ 
it follows that there exists $\bar r_1=\bar r_1(r_2, \ldots r_{2N-1}) \in (0,r_2)$ such that 
$f_1(\bar r_1,\ldots,r_{2N-1})=0$. As before,
in case of multiple solutions we choose the smallest one.
Notice that $\bar r_1$ is continuous as a function of $r_2,\ldots, r_{2N-1}$.
Notice also that, if we fix $r_3,\ldots,r_{2N-1}$ and let $r_2\to r_3$,
letting $F:= B_{\bar r_1} \cup A_{r_3,r_2}$ and
proceeding as in  Step 1, we get
\begin{eqnarray*}
H_s(x_{\bar r_1},E) &=& - H_s(x_{\bar r_1},F) + O(1) =2 \big( c+o(1)\big) \left( \frac{1}{|r_2-\bar r_1|^{n+s}} 
- \frac{1}{|r_3-\bar r_1|^{n+s}}\right) + O(1)
\end{eqnarray*}
Since $f_1(\bar r_1,\ldots,r_{2N-1})=0$, we also have $H_s(x_{\bar r_1},E)=-\bar r_1 H_s(x_{\bar r_1},E) = O(1)$, whence
\begin{equation}\label{lolli}
\lim_{r_2\to r_3} \frac{|r_2-\bar r_1|}{|r_3-\bar r_1|} = 1.
\end{equation}

\vspace{0,3cm}

\noindent{\it Step 3.}  Let now $2\le i<2N-1$. By induction assumption, for all $j<i$ there exist continuous
functions $\bar r_j(r_i,\ldots,r_{2N-1}))$ such that  $f_j(\bar r_1,\ldots, \bar r_{i-1}, r_i, \ldots, r_{2N})=0$.
In view of \eqref{lolli}, we shall also assume that
\begin{equation*}
\lim_{r_i\to r_{i+1}} \frac{|r_i-\bar r_{i-1}|}{|r_{i+1}-\bar r_{i-1}|} = 1,
\end{equation*}
which is equivalent to
\begin{equation}\label{ruggi}
\lim_{r_i\to r_{i+1}} \frac{|r_{i+1}-r_{i}|}{|r_{i}-\bar r_{i-1}|} = 0.
\end{equation}
Given a choice of $r_j$ for $j>i$, we want to find $\bar r_i$ such that  
\begin{equation}\label{bardo}
f_i(\bar r_1,\ldots, \bar r_{i}, r_{i+1}, \ldots, r_{2N})=0
\end{equation}
and 
\begin{equation}\label{bardi}
\lim_{r_{i+1}\to r_{i+2}} \frac{|r_{i+1}-\bar r_{i}|}{|r_{i+2}-\bar r_{i}|} = 1.
\end{equation}
We first notice that 
\[
\lim_{r_i\to 0} f_i(\bar r_1,\ldots, \bar r_{i-1}, r_i, \ldots, r_{2N-1}) 
= \lim_{r_i\to 0}  (-1)^{i-1} H_s(x_i,E)
= -\infty.
\]
We now consider the limit $r_i\to r_{i+1}$. Reasoning
as in Step 1, we get
\begin{eqnarray*}
(-1)^{i-1} H_s(x_{r_i},E) &=& 2\big( c+o(1)\big) \left( \frac{1}{|r_{i+1}-r_i|^s} 
+\sum_{j=k}^{i-1} \frac{ (-1)^{i-k}}{|r_i-\bar r_k|^s} \right) + O(1),
\end{eqnarray*}
and therefore, recalling \eqref{ruggi},
\begin{eqnarray*}
\lim_{r_i\to r_{i+1}} f_i(\bar r_1,\ldots, \bar r_{i-1}, r_i, \ldots, r_{2N-1}) 
&=& \lim_{r_i\to r_{i+1}}  (-1)^{i-1} H_s(x_{r_i},E) +O(1) 
\\
&=& \lim_{r_i\to r_{i+1}}   \left(\frac{1}{|r_{i+1}-r_i|^s} 
+\sum_{j=k}^{i-1} \frac{ (-1)^{i-k}}{|r_i-\bar r_k|^s}\right) = +\infty.
\end{eqnarray*}
By continuity of $f_i$ 
it follows that there exists $\bar r_i$ such that 
$f_i(\bar r_1,\ldots, \bar r_{i}, r_{i+1}, \ldots, r_{2N})=0$. As before,
in case of multiple solutions we choose the smallest one.

We now show \eqref{bardi}. If we fix $r_{i+2},\ldots,r_{2N-1}$ and let $r_{i+1}\to r_{i+2}$, 
from \eqref{bardo} we get $H_s(x_{\bar r_i},E) = O(1)$, which implies
\begin{eqnarray*}
-\frac{1}{|r_{i+2}-\bar r_{i}|^s} + \frac{1}{|r_{i+1}-\bar r_i|^s} 
+\sum_{j=k}^{i-1} \frac{ (-1)^{i-k}}{|\bar r_i-\bar r_k|^s} = O(1).
\end{eqnarray*}
Multiplying by $|r_{i+1}-\bar r_i|^s$ and recalling \eqref{ruggi}
we then get
\begin{eqnarray*}
\lim_{r_{i+1}\to r_{i+2}} \frac{|r_{i+1}-\bar r_i|^s}{|r_{i+2}-\bar r_{i}|^s}
- \sum_{j=k}^{i-1} \frac{ (-1)^{i-k}|r_{i+1}-\bar r_i|^s}{|\bar r_i-\bar r_k|^s} 
= \lim_{r_{i+1}\to r_{i+2}} \frac{|r_{i+1}-\bar r_i|^s}{|r_{i+2}-\bar r_{i}|^s}
= 1,
\end{eqnarray*}
which gives \eqref{bardi}.

\vspace{0,3cm}

\noindent{\it Step 4.} Finally, for $i=2N-1$ we still have
\[
\lim_{r_{2N-1}\to 0} f_{2N-1}(\bar r_1,\ldots, \bar r_{2N-2}, r_{2N-1}) 
= -\infty.
\]
We now consider the limit $r_{2N-1}\to 1$. Recalling \eqref{bardi} with $i=2N-2$, as in Step 1 we get
\begin{eqnarray*}
H_s(x_{r_{2N-1}},E) &=&2 \big( c+o(1)\big) \left( \frac{1}{(1-r_{2N-1})^s} 
- \frac{1}{(r_{2N-1}-\bar r_{2N-2})^s}\right) + O(1)
\\ 
H_s(x_{1},E) &=& 
2\big( c+o(1)\big) \left( \frac{1}{(1-r_{2N-1})^s} 
- \frac{1}{(1-\bar r_{2N-2})^s}\right) + O(1)
\\
&=& 
2\big( c+o(1)\big) \, \frac{1}{(1-r_{2N-1})^s} + O(1),
\\
&=& 
2\big( c+o(1)\big) \, \frac{1}{(1-r_{2N-1})^s} + O(1).
\end{eqnarray*}
Therefore, we have
\begin{eqnarray*}
0 &=& \lim_{r_{2N-1}\to 1} f_{2N-1}(\bar r_1,\ldots, \bar r_{2N-2}, r_{2N-1}) 
\\
&=& \lim_{r_{2N-1}\to 1} \left( r_{2N-1} H_s(x_{1},E) + H_s(x_{r_{2N-1}},E)\right)
\\
&=& \lim_{r_{2N-1}\to 1} 2\big( c+o(1)\big) , \frac{1+r_{2N-1}}{|1-r_{2N-1}|^s} = +\infty.
\end{eqnarray*}
As before, it follows that there exists $\bar r_{2N-1}$ such that 
$f_{2N-1}(\bar r_1,\ldots, \bar r_{2N-1}) =0$. 
\end{proof}

\begin{remark}
\upshape An interesting question which is left open by the previous result is the issue of uniqueness for self-shrinkers with  a prescribed number of boundary spheres. In the simplest case, that is the annulus, in Proposition \ref{omoprop} we prove uniqueness of the ratio $\frac{R}{r}$ for which the annulus $B_R\setminus B_r$ is a self-similar shrinker.
\end{remark}
 
{}From Theorem \ref{omoproptris} we readily obtain the existence of cylindrical  self-shrinkers. 

 \begin{corollary}\label{omocil}  Let $k<n$. 
For all $N\ge 1$  and $R>0$ there exists  an increasing sequence 
$0< r_1<\ldots<r_{2N-1}<r_{2N}=R$, depending only on $k$, $s$ and $N$, such that 
such that the flow \eqref{kflow} with initial datum
\[
C:= \R^{n-k}\times  \bigcup_{j=1}^N \left(B^k_{r_{2j}}\setminus B^k_{r_{2j-1}}\right)
\] 
is a homothetically shrinking  solution of \eqref{kflow}, where $B^k_r$ denotes the ball of radius $r$ in $\R^k$. 

Similarly, for all $N\ge 1$ and $R>0$ there exists an increasing sequence 
$0< \tilde r_0<\tilde r_1<\ldots<\tilde r_{2N-1}<\tilde r_{2N}=R$, depending only on $k$, $s$ and $N$, such that 
such that the flow \eqref{kflow} with initial datum
\[
\widetilde C:=\R^{n-k}\times B^k_{\tilde r_0}\cup\, \bigcup_{j=1}^N \left(B^k_{\tilde r_{2j}}\setminus B^k_{\tilde r_{2j-1}}\right)
\] 
is a homothetically shrinking  solution of \eqref{kflow}.
\end{corollary}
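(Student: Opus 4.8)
The plan is to deduce Corollary \ref{omocil} from Theorem \ref{omoproptris} by exploiting the behaviour of the fractional mean curvature under the splitting $\R^n=\R^{n-k}\times\R^k$. The key observation is that if $F\subseteq\R^k$ is of class $C^{1,1}$ and $C:=\R^{n-k}\times F$, then for any point $(z,x)$ with $z\in\R^{n-k}$ and $x\in\partial F$, the outer normal to $\partial C$ is $(0,\nu_F(x))$ and
\[
H_s^{(n)}\big((z,x),C\big) = \int_{\R^{n-k}}\frac{dw}{(|w|^2+|x-y|^2)^{(n+s)/2}}\cdot(\text{integration in }y)\ =\ c(n,k)\, H_{s'}^{(k)}(x,F),
\]
in the sense that after integrating out the $\R^{n-k}$ variable the $n$-dimensional principal-value integral reduces to a $k$-dimensional principal-value integral against the kernel $|x-y|^{-(k+s')}$ with shifted exponent $s'=s\in(0,1)$ (indeed $n+s-(n-k)=k+s$), up to a positive multiplicative constant $c(n,k)=\int_{\R^{n-k}}(1+|w|^2)^{-(n+s)/2}\,dw$. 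The first step is therefore to record this reduction formula carefully, checking that the principal value in $\R^n$ passes to the principal value in $\R^k$ (the excised ball $B_\e(x)\subset\R^n$ can be compared with a cylinder $B_\e^{n-k}\times B_\e^k$, the difference being an absolutely convergent integral).

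Once this is in hand, the second step is purely formal: the self-shrinker equation \eqref{omo} for $C=\R^{n-k}\times F$ at a boundary point $(z,x)$ reads $(z,x)\cdot(0,\nu_F(x)) = x\cdot\nu_F(x) = c\,H_s^{(n)}((z,x),C) = c\,c(n,k)\,H_s^{(k)}(x,F)$, which is exactly \eqref{omo} in $\R^k$ for the set $F$ with a rescaled constant $\tilde c = c\,c(n,k)>0$. Hence $C$ is a self-shrinker in $\R^n$ if and only if $F$ is a self-shrinker in $\R^k$. Applying Theorem \ref{omoproptris} in dimension $k$ to produce $F=\bigcup_{j=1}^N(B^k_{r_{2j}}\setminus B^k_{r_{2j-1}})$ (respectively $F=B^k_{\tilde r_0}\cup\bigcup_{j=1}^N(B^k_{\tilde r_{2j}}\setminus B^k_{\tilde r_{2j-1}})$) immediately yields the two claimed families, with radii depending only on $k$, $s$, $N$.

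The main obstacle, and the only point requiring genuine care, is the reduction formula of Step 1: one must justify that the nonlocal curvature of the cylinder is well defined (the $\R^{n-k}$-integration of $|w|^{-(n+s)}$ near $w=0$ is fine since $n+s<n+s$... more precisely the singularity at $(w,y)=(0,x)$ is integrated via the principal value exactly as in $\R^k$), and that the resulting constant $c(n,k)$ is finite and positive — finiteness at $w=\infty$ holds because $n+s>n-k$, i.e. $k+s>0$. One should also note that $F$ has compact boundary while $C$ does not, so strictly speaking the flow \eqref{kflow} for $C$ should be understood in the sense already used implicitly for cylinders in the introduction; since the geometric law is translation-invariant in the $\R^{n-k}$ directions, a homothetic solution in $\R^k$ with factor $\lambda(t)$ lifts to the homothetic solution $C_t=\R^{n-k}\times\lambda(t)F$, and the verification is identical to the bounded case. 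No new estimates beyond Lemma \ref{lemtech} and Theorem \ref{omoproptris} are needed.
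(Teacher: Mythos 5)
Your proof is correct and is exactly the (unstated) argument behind the paper's remark that the corollary ``readily'' follows from Theorem \ref{omoproptris}: Fubini in the $\R^{n-k}$ variable turns the kernel $|\cdot|^{-(n+s)}$ into $c(n,k,s)\,|x-y|^{-(k+s)}$, so $H_s^{(n)}\big((z,x),\R^{n-k}\times F\big)=c(n,k,s)\,H_s^{(k)}(x,F)$ and \eqref{omo} for the cylinder reduces to \eqref{omo} for $F\subset\R^k$, to which the theorem applies. One small inaccuracy: the discrepancy between excising $B_\e^n((z,x))$ and excising $\R^{n-k}\times B^k_\e(x)$ is \emph{not} an absolutely convergent integral (it is of order $\e^{-s}$ in absolute value); the two principal values nevertheless coincide by the usual odd-part cancellation for $C^{1,1}$ boundaries, the difference region being symmetric under point reflection through $(z,x)$.
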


\begin{remark}\upshape\label{rem} 
We observe that  the radii $r(n,s)$ in Proposition \ref{omoprop}, $r_i(n,s), \tilde r_i(n,s)$ in Theorem \ref{omoproptris} and Corollary \ref{omocil} all  satisfy $\lim_{s\to 1} r(n,s)=\lim_{s\to 1}   r_i(n,s)=\lim_{s\to 1} \tilde r_i(n,s)=R$.  

We give a brief justification of this fact just for the simplest case, that is the case of $r(n,s)$ in Proposition \ref{omoprop}, the others being completely analogous. 
 We recall that if $E\subset\R^n$ is a compact set with $C^2$ boundary then 
$(1-s)H_s(x,E)$ converges uniformly as $s\to 1$ to  the classical mean curvature $H(x,\partial E)$ (see \cite{av}). Under  the same notation as in the proof of Proposition \ref{omoprop}, we note that for $s=1$ the function $f_1(r)$ defined in \eqref{funzionef} is given by $r-\frac{1}{r}$ (this is also true for the functions $f_i$ defined in the proof of Theorem \ref{omoproptris}, that is $f_i= r_i-\frac{1}{r_i}$). So, by uniform convergence of the curvatures, we get that if   $(r_k)_k$ is a sequence with $r_k\in (0,1)$ and $r_k\to 1$,  there exists $(s_k)_k$ with $0<s_k<1$ such that $f_s(t)<0$ for $t\in (0, r_k]$ and $s\geq s_k$. This  implies that $r(n, s)>r_k$ for all $s\geq s_k$ and that $s_k\to 1$, since $\lim_{r\to 1}f_s(r)=+\infty$ for all $s<1$.    \end{remark} 

\section{Stability}\label{sec:stab}
We now discuss the dynamic stability of the symmetric self-shrinkers constructed in the previous section.
By definition self-shrinkers are stationary solutions to the flow 
\begin{equation}\label{sflow} 
\partial_t x\cdot \nu=-H_s(x,E)+x\cdot \nu. 
\end{equation}
If the initial datum is rotationally symmetric as in Theorem \ref{omoproptris} then \eqref{sflow} becomes a system of ODE's
in the radii $r_i$, and Theorem   \ref{omoproptris}  guarantees the existence of a stationary point for every number of radii.
We are interested in the stability of such critical points, with respect to perturbations which are orthogonal to the 
vector $(r_1,\ldots r_{2N})$ (or resp. $(r_0,\ldots r_{2N})$) given by the radii. Indeed this vector corresponds to a rescaling of the initial datum, and therefore gives a direction of instability for the system which is not geometrically significant.

In the symmetric situation, we can rewrite \eqref{sflow} as the system of ODE's
\begin{equation}\label{syst} 
\dot r_i= (-1)^{i-1}H_s(x_i,E)+r_i  \qquad i\le 2N.
\end{equation}

\begin{theorem}\label{teostab} 
Fix $N\ge 1$, and let $E$ (resp. $\tilde E$) be the symmetric shrinker given by Theorem \eqref{omoproptris},
corresponding to the stationary point $(\bar r_1,\ldots \bar r_{2N})$ 
(resp. $(\bar r_0,\ldots \bar r_{2N})$)
for the system \eqref{syst}.
Then, the Morse index of such point is at least $2$, in particular the corresponding homothetic solution is dynamically unstable.
\end{theorem}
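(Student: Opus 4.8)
The plan is to linearise the system of ODEs \eqref{syst} at the equilibrium $\bar r=(\bar r_1,\dots,\bar r_{2N})$ (resp.\ $(\bar r_0,\dots,\bar r_{2N})$) and to prove that the Jacobian matrix $A=(A_{ij})$, $A_{ij}:=\partial_{r_j}G_i(\bar r)$ with $G_i(r):=(-1)^{i-1}H_s(x_{r_i},E(r))+r_i$, has at least two positive eigenvalues; throughout I would use the same comparison‑with‑balls technique as in the proof of Proposition \ref{omoprop}. First I would record the structure of $A$. A perturbation of the $j$‑th radius moves only the sphere $\partial B_{\bar r_j}$ and changes $\chi_E$ only across it, so for $i\ne j$ one finds $A_{ij}=(-1)^{i-1}\partial_{r_j}H_s(x_{\bar r_i},E)=2(-1)^{i+j}\int_{\partial B_{\bar r_j}}|x_{\bar r_i}-y|^{-n-s}\,dy=2(-1)^{i+j}\bar r_j^{\,n-1}\psi(\bar r_i,\bar r_j)$, where $\psi(a,b):=\int_{\partial B_1}|ae_1-by|^{-n-s}\,dy$ is visibly symmetric in $(a,b)$. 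Hence $\bar r_i^{\,n-1}A_{ij}=\bar r_j^{\,n-1}A_{ji}$, so $A$ is conjugate, through $D:=\operatorname{diag}(\bar r_i^{(n-1)/2})$, to the symmetric matrix $DAD^{-1}$ (equivalently, \eqref{syst} is a gradient flow for a weighted $L^2$ metric). In particular $A$ has real eigenvalues, and its Morse index is the number of positive ones.

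Next I would isolate the trivial instability. Since $H_s$ is $(-s)$‑homogeneous, $G_i(\lambda\bar r)=(-1)^{i-1}\lambda^{-s}H_s(x_{\bar r_i},E)+\lambda\bar r_i$, and using the stationarity relation $(-1)^{i-1}H_s(x_{\bar r_i},E)=-\bar r_i$ one gets, differentiating at $\lambda=1$, that $A\bar r=(1+s)\bar r$. Thus $1+s>0$ is an eigenvalue — the rescaling direction of the initial datum — and, $A$ being $D$‑self‑adjoint, it preserves the $D$‑orthogonal complement $\{\bar r\}^{\perp}$. It is therefore enough to show $\operatorname{tr}A>1+s$: indeed then $A|_{\{\bar r\}^{\perp}}$ has strictly positive trace, hence is not negative semidefinite and possesses a positive eigenvalue, giving a second positive eigenvalue of $A$.

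The core of the proof is then to show that $A_{ii}=1+(-1)^{i-1}\partial_{r_i}H_s(x_{r_i},E)>1$ for each $i$, which yields $\operatorname{tr}A>2N\ge 2>1+s$. Comparing $E$ with the ball $B_{r_i}$ when $i$ is even, and with $\R^n\setminus B_{r_i}$ when $i$ is odd — which coincide with $E$ near $\partial B_{r_i}$ — one obtains a decomposition $H_s(x_{r_i},E)=\pm k(n)\,r_i^{-s}+2\int_{R_1}|x_{r_i}-y|^{-n-s}\,dy-2\int_{R_2}|x_{r_i}-y|^{-n-s}\,dy$, with the sign $+$ for $i$ even and $-$ for $i$ odd, where $R_1$ and $R_2$ are rotationally symmetric, one of them contained strictly inside $B_{\bar r_i}$ and the other strictly outside $B_{\bar r_i}$ (both separated from $\partial B_{\bar r_i}$; a degenerate $R_k$ is empty). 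The key observation is that the thin shell added to or removed from $E$ when $r_i$ is perturbed lies on the side of $\partial B_{r_i}$ that meets neither $R_1$ nor $R_2$; hence $R_1$ and $R_2$ do \emph{not} depend on $r_i$ near $\bar r_i$, and differentiating the decomposition in $r_i$ leaves only the term $\mp s k(n)r_i^{-s-1}$ together with the radial derivatives of the two potentials as the evaluation point $x_{r_i}$ moves radially outward. Since $\int_S |x-y|^{-n-s}\,dy$ is a nondecreasing function of $|x|$ as long as $B_{|x|}$ does not meet the symmetric set $S$ (the monotonicity already used in the proof of Proposition \ref{omoprop}; it also follows because $|x-y|^{-n-s}$ is subharmonic in $x$ on that ball), and a nonincreasing function of $|x|$ as long as $B_{|x|}$ contains $S$ (immediate, as $|x-y|$ is then increasing in $|x|$ for each $y\in S$), one checks that for $i$ even all three contributions to $\partial_{r_i}H_s(x_{r_i},E)$ are $\le 0$ with the first strictly negative, whereas for $i$ odd they are all $\ge 0$ with the first strictly positive. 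In either case $(-1)^{i-1}\partial_{r_i}H_s(x_{r_i},E)>0$, so $A_{ii}>1$.

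Combining the three steps, $A$ has the positive eigenvalue $1+s$ and, because $\operatorname{tr}A-(1+s)>0$, a second positive eigenvalue on $\{\bar r\}^{\perp}$; hence the Morse index is at least $2$ and, in particular, the corresponding homothetic solution is dynamically unstable. The argument for $\widetilde E$ is the same, now with $2N+1$ radii (so $\operatorname{tr}A>2N+1$) and the natural parity convention making the innermost sphere $\partial B_{\bar r_0}$ of ``even type''. The main obstacle is the third step: verifying carefully that a variation of $r_i$ does not move the truncated sets $R_1,R_2$, and keeping track of the correct direction of radial monotonicity of the two truncated potentials — everything else is soft linear algebra together with the scaling identity.
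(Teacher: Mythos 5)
Your proposal is correct, and it follows the paper's overall strategy (linearize \eqref{syst}, identify $\bar r$ as an eigenvector with eigenvalue $1+s$ via $(-s)$-homogeneity and stationarity, then extract a second positive eigenvalue from the diagonal of the Jacobian), but it differs in two substantive ways. First, you make explicit that $\bar r_i^{\,n-1}A_{ij}=\bar r_j^{\,n-1}A_{ji}$, so that $A$ is conjugate to a symmetric matrix via $D=\operatorname{diag}(\bar r_i^{(n-1)/2})$; the paper never states this, yet its own final step (passing from $\max_{|v|=1}vDg\,v^t>s+1$ to the existence of a second eigenvalue $>s+1$) is only legitimate for a symmetrizable matrix, so your observation actually fills a small gap in the published argument. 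Second, where the paper proves only the single inequality $\partial_{r_{2N}}g_{2N}(\bar r)>s+1$ — using the scaling identity together with stationarity to write $A_{ii}=s+1+(\text{correction})$, and then showing the correction for $i=2N$ is positive via the monotonicity of $r\mapsto\int_{\partial B_r}|x_{\bar r_{2N}}-y|^{-n-s}\,dy$ and the alternating structure of the sum — you instead prove $A_{ii}>1$ for \emph{every} $i$ by comparing $E$ near $\partial B_{r_i}$ with $B_{r_i}$ or $\R^n\setminus B_{r_i}$, noting that the residual sets $R_1,R_2$ do not move with $r_i$, and using the radial monotonicity of the two truncated potentials; you then conclude by a trace argument on the invariant complement of $\bar r$. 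Your route is slightly more robust (it avoids having to order an alternating sum and works uniformly in $i$), at the modest cost of yielding only a positive second eigenvalue rather than one exceeding $s+1$; both conclusions give Morse index at least $2$. The sign bookkeeping in your third step (inner set decreasing, outer set increasing, and the $\mp s\,k(n)r_i^{-s-1}$ term carrying the strict sign) checks out against the paper's closed-form expression for $\partial_{r_i}g_i$ at the stationary point, and the inequality $\operatorname{tr}A>2N\ge 2>1+s$ indeed holds since $s<1$.
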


\begin{proof} 
We shall prove the assertion for the shrinker $E$, since the proof for $\tilde E$ is analogous.

For the reader convenience, we first present in detail the case $N=1$, corresponding to an annulus $A=B_{\bar r_2}\setminus B_{\bar r_1}$. The system \eqref{syst} then becomes
\begin{equation}\label{sistemaanello}
\begin{cases}
\dot {r_1}= H_s(x_{r_1}, A)+r_1= \frac{k(n)}{r_1^s}+2\lim_{\eps\to 0}\left(\int_{B_{r_1-\eps}} \frac{1}{|x_{r_1}-y|^{n+s}}dy-
 \int_{B_{r_2}\setminus B_{r_1+\eps}} \frac{1}{|x_{r_1}-y|^{n+s}}dy\right)+r_1\\  
\dot r_2= -H_s(x_{r_2}, A)+r_2=-\frac{k(n)}{r_2^s}-2\int_{B_{r_1}} \frac{1}{|x_{r_2}-y|^{n+s}}dy+r_2.  
\end{cases}
\end{equation}
We define the function $g(r_1, r_2)= \left(g_1(r_1, r_2), g_2(r_1, r_2)\right)$ as follows:
\[\begin{cases} g_1(r_1, r_2):= \frac{k(n)}{r_1^s}+2\lim_{\eps\to 0}\left(\int_{B_{r_1-\eps}} \frac{1}{|x_{r_1}-y|^{n+s}}dy-
\int_{B_{r_2}\setminus B_{r_1+\eps}} \frac{1}{|x_{r_1}-y|^{n+s}}dy\right)+r_1\\  
g_2(r_1, r_2):=-\frac{k(n)}{r_2^s}-2\int_{B_{r_1}} \frac{1}{|x_{r_2}-y|^{n+s}}dy+r_2.\end{cases}\]

We now   compute the Jacobian matrix $Dg$ at the point $(\bar r_1, \bar r_2)$ which is  a stationary point for \eqref{sistemaanello}, that is $g(\bar r_1, \bar r_2)=0$. 

We observe the following fact: for  $\delta\neq 0$, $\eps>0$, $R>r>|\delta|$,  
  there hold 
\begin{eqnarray*} \int_{B_{r+\delta-\eps}} \frac{1}{|x_{r+\delta}-y|^{n+s}}dy&=&
\left(\frac{r}{r+\delta}\right)^s\int_{B_{r-\eps\frac{r}{r+\delta}}} \frac{1}{|x_r-y|^{n+s}}dy\\
\int_{B_R\setminus B_{r+\delta+\eps}} \frac{1}{|x_{r+\delta}-y|^{n+s}}dy
&=&\left(\frac{r}{r+\delta}\right)^s\int_{B_{\frac{Rr}{r+\delta}}\setminus B_{r+\eps\frac{r}{r+\delta}}} \frac{1}{|x_r-y|^{n+s}}dy.
\end{eqnarray*} 
So, using these equalities we  get that  the derivative of $g_1$ at $(\bar r_1, \bar r_2)$ are given by  
\begin{eqnarray} \nonumber \partial_{r_1} g_1(\bar r_1, \bar r_2)&=&  -\frac{sk(n)}{\bar r_1^{s+1}}-\frac{2s}{\bar r_1}\lim_{\eps\to 0}\left( \int_{B_{\bar r_1-\eps}} \frac{1}{|x_{\bar r_1}-y|^{n+s}}dy-\int_{B_{\bar r_2}\setminus B_{\bar r_1+\eps}} \frac{1}{|x_{\bar r_1}-y|^{n+s}}dy\right)\\ &+&\frac{2\bar r_2}{\bar r_1}  \int_{\partial B_{\bar r_2}}  \frac{1}{|x_{\bar r_1}-y|^{n+s}}dy+1 \\\nonumber &=& -\frac{s}{r}g_1(\bar r_1, \bar r_2)+s+1+\frac{2\bar r_2}{\bar r_1}  \int_{\partial B_{\bar r_2}}  \frac{1}{|x_{\bar r_1}-y|^{n+s}}dy\\
&=&  s+1+\frac{2\bar r_2}{\bar r_1}  \int_{\partial B_{\bar r_2}}  \frac{1}{|x_{\bar r_1}-y|^{n+s}}dy\
\label{derivata1} 
 \\\partial_{r_2} g_1(\bar r_1, \bar r_2)&=& -2 \int_{\partial B_{\bar r_2}}  \frac{1}{|x_{\bar r_1}-y|^{n+s}}dy.\nonumber  \end{eqnarray}

Analogously, we observe that 
for  $\delta\neq 0$,  $R>r>|\delta|$,  
there holds
\[\int_{B_r} \frac{1}{|x_{R+\delta}-y|^{n+s}}dy
=\left(\frac{R}{R+\delta}\right)^s\int_{B_{\frac{Rr}{R+\delta}}} \frac{1}{|x_R-y|^{n+s}}dy.
\]
Using this equality, we  compute the derivative of $g_2$ at $(\bar r_1, \bar r_2)$:
\begin{eqnarray} \nonumber \partial_{r_1} g_2(\bar r_1, \bar r_2)&=& -2\int_{\partial B_{\bar r_1}}  \frac{1}{|x_{\bar r_2}-y|^{n+s}}dy\\
\partial_{r_2} g_2(\bar r_1, \bar r_2)&=& \frac{sk(n)}{\bar r_2^{s+1}}+\frac{2s}{\bar r_2}\int_{B_{\bar r_1}} \frac{1}{|x_{\bar r_2}-y|^{n+s}}dy+\frac{2\bar r_1}{\bar r_2}\int_{\partial B_{\bar r_1}}  \frac{1}{|x_{\bar r_2}-y|^{n+s}}dy+1\nonumber \\
&=& 
-\frac{s}{\bar r_2}g_2(\bar r_1,\bar r_2)+s+1+\frac{2\bar r_1}{\bar r_2}\int_{\partial B_{\bar r_2}}  \frac{1}{|x_{\bar r_2}-y|^{n+s}}dy\nonumber\\
&=& s+1+\frac{2\bar r_1}{\bar r_2}\int_{\partial B_{\bar r_2}}  \frac{1}{|x_{\bar r_2}-y|^{n+s}}dy
.\label{derivata2}
\end{eqnarray}

Note that, using \eqref{derivata1} and \eqref{derivata2}, 
\[Dg(\bar r_1,\bar r_2)(\bar r_1,\bar r_2)^t= (s+1)(\bar r_1,\bar r_2)^t\] 
so that $(\bar r_1,\bar r_2)$ is an eigenvector with eigenvalue $s+1>0$.
Moreover, by \eqref{derivata2}, we observe that $\partial_{r_2} g_2(\bar r_1, \bar r_2)>s+1$. 
This implies that
\begin{equation}\label{auto}
\max_{v:\,|v|=1} v Dg(\bar r_1, r_2) v^t\geq (0,1)Dg(\bar r_1, \bar r_2)(0,1)^t > s+1,
\end{equation}
which gives that 
$Dg(\bar r_1, r_2) $  has a second eigenvalue bigger than $s+1$, and then in particular positive. 

%
%
 
 \smallskip
 
 We now consider the general case of a self-shrinker 
 \begin{equation}\label{selfgen}
E:= \bigcup_{k=1}^N \left(B_{\bar r_{2k}}\setminus B_{\bar r_{2k-1}}\right).
\end{equation}
We also let $\bar r=(\bar r_1,\ldots,\bar r_{2N}),\,g(\bar r)=(g_1(\bar r),\ldots,g_{2N}(\bar r)) \in \R^2N$, where
\begin{eqnarray*}
g_{i}(\bar r):= -H_s(x_{i},E)+\bar r_{i}&=& -\frac{k(n)}{\bar r_{i}^s} + \bar r_{i}
+2\sum_{j< i}  (-1)^{i-j} \int_{B_{\bar r_j}}\frac{1}{|x_{\bar r_i}-y|^{n+s}}\,dy
\\
&& -2\sum_{j> i}  (-1)^{i-j} \int_{\R^n\setminus B_{\bar r_j}}\frac{1}{|x_{\bar r_i}-y|^{n+s}}\,dy,
\end{eqnarray*}
 if the index $i$ is even, and 
 \begin{eqnarray*}
g_i(\bar r)&:=& H_s(x_i,E)+r_i =
-H_s(x_i,\R^n\setminus E)+r_i 
\\
&=& - \frac{k(n)}{\bar r_{i}^s} + \bar r_{i}
+2\sum_{j< i}  (-1)^{i-j} \int_{B_{\bar r_j}}\frac{1}{|x_{\bar r_i}-y|^{n+s}}\,dy
\\
&& -2\sum_{j> i}  (-1)^{i-j} \int_{\R^n\setminus B_{\bar r_j}}\frac{1}{|x_{\bar r_i}-y|^{n+s}}\,dy,
\end{eqnarray*}
if $i$ is odd. Notice that, since $\bar r$ is a stationary solutions to \eqref{syst}, we have $g(\bar r)=0$. 

We compute, for $j\ne i$, 
\[
\frac{\partial g_i}{\partial r_j}(\bar r) = 2 (-1)^{i-j}  \int_{\partial B_{\bar r_j}}\frac{1}{|x_{\bar r_i}-y|^{n+s}}\,dy,
\]
and
 \begin{eqnarray*}
\frac{\partial g_i}{\partial r_i}(\bar r) &=& - \frac{s k(n)}{\bar r_{i}^{s+1}} + 1 
-2\frac{s}{\bar r_i}\sum_{j< i}  (-1)^{i-j} \int_{B_{\bar r_j}}\frac{1}{|x_{\bar r_i}-y|^{n+s}}\,dy
\\
&& +2\frac{s}{\bar r_i}\sum_{j> i}  (-1)^{i-j} \int_{\R^n\setminus B_{\bar r_j}}\frac{1}{|x_{\bar r_i}-y|^{n+s}}\,dy
 -\frac{2}{\bar r_i}\sum_{j\ne i}  (-1)^{i-j} \bar r_j \int_{\partial B_{\bar r_j}}\frac{1}{|x_{\bar r_i}-y|^{n+s}}\,dy
\\
&=& -\frac{2}{\bar r_i} g_i(\bar r) + s+ 1  -\frac{2}{\bar r_i}\sum_{j\ne i}  (-1)^{i-j} \bar r_j \int_{\partial B_{\bar r_j}}\frac{1}{|x_{\bar r_i}-y|^{n+s}}\,dy
\\
&=&  s+ 1  +2 \sum_{j\ne i}  \frac{\bar r_j}{\bar r_i}\, (-1)^{i-j+1} \int_{\partial B_{\bar r_j}}\frac{1}{|x_{\bar r_i}-y|^{n+s}}\,dy.
\end{eqnarray*}
Notice that
\[
Dg(\bar r) \bar r^t = \sum_j  \frac{\partial g_i}{\partial r_j}(\bar r) r_j = (s+1) \bar r^t,
\]
so that $\bar r$ is an eigenvector with eigenvalue $s+1>0$.

Now we claim that \begin{equation}
\label{clamipo} \frac{\partial g_{2N}}{\partial r_{2N}}(\bar r) > s+1.
\end{equation} 
If the claim is true, then reasoning as in \eqref{auto}, we conclude that there exists 
 an eigenvalue of $Dg(\bar r)$ which  is strictly greater than $s+1$ (and then positive), so that the Morse index of $(\bar r_1,\ldots \bar r_{2N})$ is at least $2$. 

Since 
\[
\frac{\partial g_{2N}}{\partial r_{2N}}(\bar r) = s+1 +\frac{2}{\bar r_{2N}}
\sum_{j=1}^{2N-1} (-1)^{j-1} \,\bar r_{2N-j} \int_{\partial B_{\bar r_{2N-j}}}\frac{1}{|x_{\bar r_{2N}}-y|^{n+s}}\,dy,
\]
to get the claim \eqref{clamipo} 
it is sufficient to prove that  
for  all $1\leq i<j<2N$ there holds
 \begin{equation}\label{mono}
\bar r_i \int_{\partial B_{\bar r_i}}\frac{1}{|x_{\bar r_{2N}}-y|^{n+s}}\,dy <
\bar r_j \int_{\partial B_{\bar r_j}}\frac{1}{|x_{\bar r_{2N}}-y|^{n+s}}\,dy.
\end{equation}
We shall prove a slightly stronger statement, namely that 
\begin{equation}\label{minni}
r\mapsto h(r) := \int_{\partial B_{r}}\frac{1}{|x_{\bar r_{2N}}-y|^{n+s}}\,dy\quad \text{ is strictly increasing on $(0,\bar r_{2N})$. }
\end{equation}
Indeed, we compute
\begin{eqnarray*}
h'(r) &=&  \int_{\partial B_{r}} \nabla \left( \frac{1}{|x_{\bar r_{2N}}-y|^{n+s}}\right) \cdot \nu(y)\,dy= \int_{B_{r}} \Delta \left( \frac{1}{|x_{\bar r_{2N}}-x|^{n+s}}\right) \,dx
\\ &=&  (n+s) (s+2) \int_{B_{r}}  \frac{1}{|x_{\bar r_{2N}}-x|^{n+s+2}}\,dx >0,
\end{eqnarray*}
which shows \eqref{minni}, and so proves \eqref{mono}.

\end{proof} 
 
 \begin{remark}
\upshape It would be interesting to determine exactly the Morse index  of the stationary points $(\bar r_1,\bar r_2, \dots, \bar r_{2N})$ (resp. $(\bar r_0,\bar r_1, \dots, \bar r_{2N})$)  of the flow \eqref{syst}. 
  In the simplest case $N=1$, we proved in Theorem \ref{teostab} that the index of $(\bar r_1,\bar r_2)$ is 
  equal to $2$. 

 It would also be interesting  to understand if the ball is dynamically stable for any perturbation, not necessarily radial,
 as it happens for the standard mean curvature flow \cites{MR772132,cm}.
 \end{remark}
 
 \begin{bibdiv}
\begin{biblist}

\bib{av}{article}{
    AUTHOR = {Abatangelo, Nicola},
    author={ Valdinoci, Enrico},
     TITLE = {A notion of nonlocal curvature},
   JOURNAL = {Numer. Funct. Anal. Optim.},
    VOLUME = {35},
      YEAR = {2014},
    NUMBER = {7-9},
     PAGES = {793--815},
}

\bib{MR845704}{article}{
   author={Abresch, Uwe},
   author={Langer, Joel C.},
   title={The normalized curve shortening flow and homothetic solutions},
   journal={J. Differential Geom.},
   volume={23},
   date={1986},
   number={2},
   pages={175--196},
   issn={0022-040X},
   review={\MR{845704}},
}

\bib{MR1167827}{article}{
   author={Angenent, Sigurd B.},
   title={Shrinking doughnuts},
   conference={
      title={Nonlinear diffusion equations and their equilibrium states, 3},
      address={Gregynog},
      date={1989},
   },
   book={
      series={Progr. Nonlinear Differential Equations Appl.},
      volume={7},
      publisher={Birkh\"{a}user Boston, Boston, MA},
   },
   date={1992},
   pages={21--38},
   review={\MR{1167827}},
}

\bib{MR1361726}{article}{
   author={Angenent, Sigurd B.},
   author={Ilmanen, Tom},
   author={Chopp, David L.},
   title={A computed example of nonuniqueness of mean curvature flow in
   $\R^3$},
   journal={Comm. Partial Differential Equations},
   volume={20},
   date={1995},
   number={11-12},
   pages={1937--1958},
   issn={0360-5302},
   review={\MR{1361726}},
   doi={10.1080/03605309508821158},
}

\bib{MR2675483}{article}{
   author={Caffarelli, Luis},
   author={Roquejoffre, Jean-Michel},
   author={Savin, Ovidiu},
   title={Nonlocal minimal surfaces},
   journal={Comm. Pure Appl. Math.},
   volume={63},
   date={2010},
   number={9},
   pages={1111--1144},
   issn={0010-3640},
   review={\MR{2675483}},
   doi={10.1002/cpa.20331},
}

\bib{cs}{article}{
   author={Caffarelli, Luis},
   author={Souganidis, Panagiotis E.},
   title={Convergence of nonlocal threshold dynamics approximations to front
   propagation},
   journal={Arch. Ration. Mech. Anal.},
   volume={195},
   date={2010},
   number={1},
   pages={1--23},
   issn={0003-9527},
   review={\MR{2564467}},
   doi={10.1007/s00205-008-0181-x},
}

\bib{cdnv}{article}{
   author={Cesaroni, Annalisa},
   author={Dipierro, Serena},
   author={Novaga, Matteo},
   author={Valdinoci, Enrico},
   title={Fattening and nonfattening phenomena for planar nonlocal curvature flows},
   journal={to appear in  Math. Ann.},
   doi={10.1007/s00208-018-1793-6},
}

\bib{cmp}{article}{
    AUTHOR = {Chambolle, Antonin},
    author={Morini, Massimiliano}, 
    author={Ponsiglione,
              Marcello},
     TITLE = {Nonlocal curvature flows},
   JOURNAL = {Arch. Ration. Mech. Anal.},
    VOLUME = {218},
      YEAR = {2015},
    NUMBER = {3},
     PAGES = {1263--1329},
       DOI = {10.1007/s00205-015-0880-z},
}

\bib{cnr}{article}{
   author={Chambolle, Antonin},
   author={Novaga, Matteo},
   author={Ruffini, Berardo},
   title={Some results on anisotropic fractional mean curvature flows},
   journal={Interfaces Free Bound.},
   volume={19},
   date={2017},
   number={3},
   pages={393--415},
   issn={1463-9963},
   review={\MR{3713894}},
   doi={10.4171/IFB/387},
}

\bib{csv1}{article}{
   author={Cinti, Eleonora},
   author={Sinestrari, Carlo},
   author={Valdinoci, Enrico},
   title={Neckpinch singularities in fractional mean curvature flows},
   journal={Proc. Amer. Math. Soc.},
   volume={146},
   date={2018},
   number={6},
   pages={2637--2646},
   issn={0002-9939},
   review={\MR{3778164}},
   doi={10.1090/proc/14002},
}	


\bib{cm}{article}{
   author={Colding, Tobias H. },
   author={Minicozzi, William P. II},
 title={Generic mean curvature flow I: generic singularities},
   journal={Ann. of Math.},
      volume={175},
   date={2012},
   number={2},
   pages={755--833},
}		
 
\bib{MR840401}{article}{
   author={Gage, Michael E.},
   author={Hamilton, Richard S.},
   title={The heat equation shrinking convex plane curves},
   journal={J. Differential Geom.},
   volume={23},
   date={1986},
   number={1},
   pages={69--96},
   issn={0022-040X},
   review={\MR{840401}},
}

\bib{MR772132}{article}{
   author={Huisken, Gerhard},
   title={Flow by mean curvature of convex surfaces into spheres},
   journal={J. Differential Geom.},
   volume={20},
   date={1984},
   number={1},
   pages={237--266},
   issn={0022-040X},
   review={\MR{772132}},
}

\bib{MR1030675}{article}{
   author={Huisken, Gerhard},
   title={Asymptotic behavior for singularities of the mean curvature flow},
   journal={J. Differential Geom.},
   volume={31},
   date={1990},
   number={1},
   pages={285--299},
   issn={0022-040X},
   review={\MR{1030675}},
}
		
\bib{i}{article}{
   author={Imbert, Cyril},
   title={Level set approach for fractional mean curvature flows},
   journal={Interfaces Free Bound.},
   volume={11},
   date={2009},
   number={1},
   pages={153--176},
   issn={1463-9963},
   review={\MR{2487027}},
   doi={10.4171/IFB/207},
}

 \bib{kl}{article}{
   author={Kleene, Stephen J.},
   author={M\o{}ller, Niels Martin},
   title={Self-shrinkers with a rotational symmetry},
   journal={Trans. Amer. Math. Soc},
   volume={366},
   date={2014},
   pages={3943--3963}
}

\bib{np}{article}{
   author={Novaga, Matteo},
   author={Paolini, Emanuele},
   title={Stability of crystalline evolutions},
   journal={Math. Models Methods Appl. Sci.},
   volume={15},
   date={2005},
   number={6},
   pages={921--937},
   issn={0218-2025},
   review={\MR{2149929}},
   doi={10.1142/S0218202505000571},
}

\bib{pp}{article}{
   author={Paolini, Maurizio},
   author={Pasquarelli, Franco},
   title={Unstable crystalline Wulff shapes in 3D},
   conference={
      title={Variational methods for discontinuous structures},
   },
   book={
      series={Progr. Nonlinear Differential Equations Appl.},
      volume={51},
      publisher={Birkh\"{a}user, Basel},
   },
   date={2002},
   pages={141--153},
   review={\MR{2197843}},
}

\bib{SAEZ}{article}{
   author = {{S{\'a}ez}, Mariel},
   author = {Valdinoci, Enrico},
    title = {On the evolution by fractional mean curvature},
  journal = {Comm. Anal. Geom.},
   volume={27},
   date = {2019},
    number={1},
}

\bib{MR1637988}{article}{
   author={Stancu, Alina},
   title={Asymptotic behavior of solutions to a crystalline flow},
   journal={Hokkaido Math. J.},
   volume={27},
   date={1998},
   number={2},
   pages={303--320},
   issn={0385-4035},
   review={\MR{1637988}},
   doi={10.14492/hokmj/1351001287},
}
	 
\end{biblist}\end{bibdiv}
  \end{document}